\author{Klaus Pommerening\\
        Johannes-Gutenberg-Universit\"at\\
        Mainz, Germany\\
        {\tt pommeren@uni-mainz.de}}
\title{The Indecomposable Solutions of Linear Congruences}
\date{}
\newtheorem{theorem}{Theorem}
\newtheorem{prop}{Proposition}
\newtheorem{lemma}{Lemma}
\newtheorem{kor}{Corollary}
\newcommand*{\N}{\mathbb{N}}
\newcommand*{\Z}{\mathbb{Z}}
\newcommand*{\supp}{\operatorname{supp}}
\newcommand*{\charac}{\operatorname{char}}
\newcommand*{\Oh}{\operatorname{O}}
\newcounter{plcnt}
\newenvironment*{proplist}%
{\begin{list}{\rm (\roman{plcnt})}%
{\usecounter{plcnt} \setlength{\labelwidth}{7mm}}}%
{\end{list}}
\newenvironment*{proof}{{\it Proof.}}{$\Diamond$ \par \vspace{3ex}}
\begin{document}
\maketitle

\begin{description}
\item[Abstract]
   This article considers the minimal non-zero (= indecomposable) solutions of the
   linear congruence $1\cdot x_1 + \cdots + (m-1)\cdot x_{m-1} \equiv 0 \pmod m$
   for unknown non-negative integers $x_1, \ldots, x_n$, and characterizes
   the solutions that attain the Eggleton-Erd\H{o}s bound. Furthermore it
   discusses the asymptotic behaviour of the number of indecomposable solutions.
   The results have direct interpretations in terms of zero-sum sequences and
   invariant theory.
\end{description}

\noindent A typical problem of additive number theory is the linear congruence:
Given $m \in \N_2$ and $a \in \Z^n$, determine $x \in \N^n$ with
\[
  {\bf (A)}\qquad\qquad\qquad\qquad
    a_1 x_1 + \cdots + a_n x_n \equiv 0 \pmod m.
  \qquad\qquad\qquad\qquad
\]
(Without loss of generality $0 \leq a_i < m$ for all $i$.)
\begin{quote}
   Note that in this article $\N$ stands for the numbers $\{0, 1, 2, \ldots\}$,
   and $\N_k$ for $\{k, k+1, \ldots\}$. Think of $0$ as being the most natural
   number.
\end{quote}
In general it's trivial to find lots of single solutions \cite{MZS}.
But getting an overview over the complete solution set seems difficult,
in particular estimating the numbers of indecomposable solutions.

The linear congruence {\bf (A)} is easily reduced to the standard congruence
\[
  {\bf (C}_m{\bf )}\qquad\qquad\qquad\qquad
     x_1 + \cdots + (m-1) \cdot x_{m-1}  \equiv  0   \pmod m.
  \qquad\qquad\qquad\qquad
\]
For $m \leq 38$ the sequence {\tt A096337} of OEIS \cite{OEIS} indicates
the number of indecomposable solutions of ${\bf (C}_m{\bf )}$. In \cite{FMPT}
these numbers ($+1$) are even listed for $m$ up to $60$.
The paper \cite{DEN} gives a weak asymptotic lower bound.

This article characterizes the indecomposable solutions that attain the bound
found by Eggleton and Erd\H{o}s \cite{E-E}. Moreover it discusses the growth
of the number of indecomposable solutions as a function of $m$.

The results have direct applications to invariant theory, my motivation
to consider them, see \cite{DEN, Kac}. Another application domain is the theory
of zero-sums, see \cite{ChSm, Gao, Hami, YL, ZYL}, that is essentially
another view at the same mathematical subject.

\section{Indecomposable Solutions}\label{s:indsol}

The solution set of {\bf (A)} is the kernel of a homomorphism, hence a finitely
generated sub-monoid $H \leq \N^n$ by Dickson's lemma \cite{Dick}.
The canonical minimal system of generators consists of the indecomposable
(or irreducible, or minimal nonzero) solutions.
Thus solving the linear congruence {\bf (A)} or ${\bf (C}_m{\bf )}$ boils down
to determining the indecomposable solutions. Meaningful partial tasks are:
\begin{description}
   \item[(I)]   {\em Find bounds for the coordinates of the indecomposable
      solutions that are as strong as possible.}
   \item[(II)]  {\em Identify and characterize indecomposable solutions with special
	    properties.}
   \item[(III)]  {\em Find algorithms that construct all indecomposable
      solutions, and analyze their efficiency.}
   \item[(IV)] {\em Determine the number of indecomposable solutions, at least
      give good estimates of this number.}
\end{description}
We expect an exponential dependency of the number of indecomposable solutions
from $m$. In particular
an algorithm as in (III) must have exponential complexity and cannot be efficient
in the proper sense.

The case $n = 1$ of the linear congruence is trivial. Here is the result:

\begin{prop}\label{p:n=1}
   Let $m \in \N_2$ and $a \in \N_1$. Then the only indecomposable solution of
   the congruence $a x \equiv 0 \pmod m$ is the minimal integer $x > 0$ with
   $m | ax$. If $m$ and $a$ are coprime, $x = m$.
\end{prop}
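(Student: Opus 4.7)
The plan is to exploit the general structure already established for solution sets in Section~\ref{indsol}: the set
\[
   H = \{x \in \N : m \mid ax\}
\]
is a sub-semigroup of $\N$ with the difference property $x,y\in H,\ x-y\in\N \Rightarrow x-y\in H$, since $a(x-y) = ax - ay \equiv 0 \pmod m$ whenever $ax\equiv ay\equiv 0$. Because $H$ is non-empty (it contains $m$ itself) and $\N$ is well-ordered, $H$ has a minimal positive element $x_0$, which is automatically indecomposable.

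The core step is to show that every element of $H$ is a non-negative integer multiple of $x_0$. Given $x\in H$, I would apply the division algorithm to write $x = q x_0 + r$ with $0\le r < x_0$. Closure under addition gives $q x_0 \in H$, and the difference property then yields $r = x - q x_0 \in H$. Minimality of $x_0$ forces $r = 0$, so $x = q x_0$. Hence any $x\in H$ with $x > x_0$ decomposes as a sum of copies of $x_0$, which proves that $x_0$ is the unique indecomposable element of $H$.

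For the supplementary assertion, when $\gcd(m,a)=1$ the congruence $m\mid ax$ is equivalent to $m\mid x$, so the smallest positive member of $H$ is $m$ itself.

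I do not anticipate any real obstacle: the proposition is a one-line consequence of the division algorithm combined with the difference property already noted for $H$. The only point that needs to stay visible in the write-up is that mere semigroup closure is not enough for the division step — one genuinely uses the stronger difference property to conclude $r\in H$.
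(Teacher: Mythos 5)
Your proof is correct: the paper states Proposition~\ref{p:n=1} without proof (calling the case $n=1$ trivial), and your argument --- take the minimal positive element $x_0$ of $H=\{x\in\N : m\mid ax\}$ and use the division algorithm together with the difference property to show every solution is a multiple of $x_0$, hence $x_0$ is the unique indecomposable one, with $x_0=m$ when $\gcd(a,m)=1$ --- is exactly the intended elementary reasoning and needs no changes.
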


The results for the case $n = 2$ are considerably more complex but known,
see \cite{Tin1} or \cite{KP_LC2}.

A naive algorithm for finding the indecomposable solutions $x \in \N^n$ of the
linear congruence starts with a finite subset $\mathcal{D} \subseteq \N^n$
that is guaranteed to contain all indecomposable solutions, checks the vectors
in $\mathcal{D}$ whether they solve the congruence, and reduces the list of
solutions to the indecomposable elements. The number of integer points in $\mathcal{D}$
is a coarse upper bound, the number of special solutions as in (II), a coarse
lower bound for the number of indecomposable solutions.

Classical results provide bounds for the coordinates of indecomposable solutions
that improve the trivial bound $x_i \leq m$: Let $x \in \N^{m-1}$ be an indecomposable
solution of ${\bf (C}_m{\bf )}$. Then
\begin{itemize}
   \item $\|x\|_1 \leq m$ (Tinsley \cite{Tin2}, a special case of Noether's
      bound \cite{Noe}),
   \item $\|x\|_1 + \sigma(x) \leq m+1$ (Eggleton/Erd\H{o}s \cite{E-E}),
   \item $\sigma(x) \leq \lceil 3\,\sqrt{m} \rceil$ (Olson \cite{Ol75}), see 
      the note~3 below,
\end{itemize}
where for a vector $x \in \N^n$ we denote by
\[
      \sigma(x) :=  \#\supp(x)\quad(\text{where }
      \supp(x) :=  \{i = 1, \ldots,n \:|\: x_i \neq 0\})
\]
the cardinality of its support, called the {\bf width} of $x$. The
Noether-Tinsley bound follows from Eggleton-Erd\H{o}s's since $\sigma(x) \geq 1$.

Moreover we call
\begin{itemize}
   \item $\|x\|_1 = x_1 + \cdots + x_n$ the {\bf length} (or degree \cite{HaWe}),
   \item $\|x\|_{\infty} = \max\{x_1, \ldots, x_n\}$ the {\bf height},
   \item $\|x\|_1 + \sigma(x)$ the {\bf total size} (= length + width),
   \item $\alpha(x) := x_1 + \cdots + n \cdot x_n$ the {\bf weight}
\end{itemize}
of $x$. Clearly in $\N$
\[
    \sigma(x) = \sum_{x_i \neq 0} 1 \leq \sum_{x_i \neq 0} x_i = \|x\|_1
    \leq \sum_{x_i \neq 0} i \cdot x_i = \alpha(x).
\]
The canonical unit vectors in $\N^n$ (or $\Z^n$) are $e_1 = (1,0,\ldots,0)$,
\ldots, \mbox{$e_n = (0,\ldots,0,1)$}.

\begin{description}
   \item[Remark] Assume an indecomposable solution $x = (x_1, \ldots, x_{m-1})$ of
      ${\bf (C}_m{\bf )}$ has a pair of coordinates $x_i > 0$ and $x_{m-i} > 0$
      with $i < m/2$. Then the solution $e_i + e_{m-i}$ is $\leq x$, hence $= x$.
      Therefore the width of an indecomposable solution $x$ is bounded by
      $\sigma(x) \leq \frac{m}{2}$, except for $m = 3$ and $x = (1,1)$.
      The Olson bound $\lceil 3\,\sqrt{m} \rceil$ is smaller than $\frac{m}{2}$
      only for $m > 36$.
\end{description}

The strong Davenport constant of an abelian group $M$, see \cite{ChFS}, is defined as
the maximum number of {\em different} elements in a minimal zerosum
multiset in $M$. (Remember that the Davenport constant is the maximum number of
not necessarily different elements in a minimal zerosum multiset.) This maximum
is attained by a minimal zerosum {\em set} (that is, without repeated elements), see \cite{ChFS}.

As a special case the strong Davenport constant of $\Z/m\Z$ is the largest
width of an indecomposable solution of ${\bf (C}_m{\bf )}$:
\[
     \text{\sf SD}(m) := \max\{\sigma(x) \:|\: x \text{ indecomposable solution of }
        {\bf (C}_m{\bf )}\},
\]
and there is an indecomposable solution $x$ of height $\|x\|_{\infty} = 1$ that attains
this bound. Thus for determining
$\text{\sf SD}(m)$ we need to consider only indecomposable solutions with all coordinates
equal to $0$ or $1$. Explicit values, easily determined by a simple program, are
\[
     \text{\sf SD}(m) = \begin{cases}
                           2 & \text{for } m = 3,\: 4,\: 5, \\
                           3 & \text{for } m = 6,\: 7, \\
                           4 & \text{for } m = 8,\: 9,\: 10, \\
                           5 & \text{for } m = 11,\: \ldots,\: 15, \\
                           6 & \text{for } m = 16,\: \ldots,\: 23.
                        \end{cases}
\]

\begin{description}
   \item[Notes] on the Erd\H{o}s-Heilbronn conjecture (EHC):
      \begin{enumerate}
         \item The EHC claims that a subset $S$ of an abelian group $M$ has a nontrivial
            subsum equal to $0$ if $s = \# S \geq c\,\sqrt{m}$ with $m = \#M$ for an absolute
            constant $c$. Erd\H{o}s and Heilbronn proved this for the cyclic group $M = \Z/p\Z$
            of prime order $p$ with $c = 3\,\sqrt{6}$. Olson \cite{Ol68} dropped
            the constant to $c = 2$ for prime order $p$, and \cite{Ol75} to $c = 3$ for
            arbitrary (even non-abelian) $M$.

         \item Let $c$ be the E-H constant valid for the abelian group $M$. Let $T \subseteq M$
            be a minimal zerosum set. Then $\# T \leq \left\lceil c \sqrt{m} \right\rceil$.

            For if $\# T > \left\lceil c \sqrt{m} \right\rceil$, then
            $\# T \geq \left\lceil c \sqrt{m} \right\rceil + 1$.
            Dropping an arbitrary element from $T$ results in a proper subset $S \subset T$
            of size $\# S \geq \left\lceil c \sqrt{m} \right\rceil$, hence containing a nontrivial
            zerosum subset. Therefore $T$ is not minimal.

         \item Olson's result \cite[Theorem 3.2]{Ol75}, applied to a subset $S \subseteq \Z/m\Z$
            of size $s$ with at most $s^2/9$ different subset sums, implies that $0$ is
            a nontrivial subset sum of $S$. The precondition on $s$ is obviously satisfied
            if $s^2/9 \geq m$, that is, $s \geq 3\,\sqrt{m}$. This yields Olson's bound.

         \item The strong form of the EHC (by Erd\H{o}s) drops the
            constant to \mbox{$c = \sqrt{2}$}. In this strong form the conjecture is open,
            the best known bound is $\sqrt{2p} + 5\,\log p$ for $m = p$ prime, and
            \mbox{$c = \sqrt{2m} + \varepsilon(m)$} where $\varepsilon(m)$ is
            $\Oh(\sqrt[3]{m}\cdot \log(m))$ for $G$ cyclic of order $m$,
            proved by Hamidoune and Z\'{e}mor \cite{HaZe}.

            Therefore we have
            \begin{itemize}
               \item $\text{\sf SD}(m) \leq \left\lceil 3\,\sqrt{m} \right\rceil$ (proved by Olson), and
               \item $\text{\sf SD}(m) \leq \left\lceil \sqrt{2m} \right\rceil$ (conjectured by Erd\H{o}s).
            \end{itemize}
            The explicit values above show that the bound $\left\lceil \sqrt{2m} \right\rceil$
            is sharp for many values of $m$.
      \end{enumerate}

   \item[Definition] Call a solution $x$ of ${\bf (C}_m{\bf )}$ {\bf extremal}
      if it is indecomposable and attains the Eggleton-Erd\H{o}s bound, that is,
      has the maximum possible total size $\|x\|_1 + \sigma(x) = m+1$.

   \item[Example 1] If $x$ is extremal and $\sigma(x) = 1$, then $\|x\|_1 = m$,
      thus $x = m\,e_i$ where $i$ is coprime with $m$, see Proposition~\ref{p:n=1}.
      There are exactly $\varphi(m)$ extremal solutions of width $1$ (where
      $\varphi$ is the Euler function).

   \item[Example 2] Here is a family of extremal solutions $x$ with $\sigma(x) = 2$:
      Let $m \geq 3$, and consider $x = (m-2)\,e_i + e_j$ where $i$ is coprime
      with $m$ and \mbox{$j \equiv 2i \pmod m$}. There are $\varphi(m)$
      extremal solutions of this type, and we'll see that they cover all extremal
      solutions of width $2$.
\end{description}

\section{Main Results}\label{s:res}

In this section we state the results. The proofs are postponed to the following sections.

Example~1 and Example~2 essentially cover all extremal solutions:

\begin{theorem}\label{th:extr}
   Assume $m \geq 3$, $m \neq 6$. Then all extremal solutions of ${\bf (C}_m{\bf )}$
   have widths $\sigma(x) = 1$ or $2$. There are exactly $2\,\varphi(m)$ extremal solutions.
\end{theorem}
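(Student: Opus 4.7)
The plan is to classify extremal solutions by width, which by Corollary~\ref{c:extrs} can only be $1$, $2$, or $3$, and then collect the counts. Examples~1 and~2 already exhibit $\varphi(m)$ extremal solutions of widths $1$ and $2$ respectively, contributing $2\,\varphi(m)$ in total. Everything then hinges on ruling out width~$3$ for every $m \neq 6$.

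The paragraphs preceding the theorem have in fact already done essentially all the work for width~$3$: normalize so that the single repeated coordinate corresponds to a divisor $d$ of $m$, write $x = u\,e_d + \sum_{i\in S} e_i$ with $\#S = 2$ and $u + 4 = m$, and apply Appendix~\ref{app:subs} to the three-element set $T = S \cup \{d\}$. The dichotomy from that appendix leaves two cases. In Case~I (at least seven distinct subset sums $\bmod\,m$) a pigeonhole argument produces two subsets $U \subseteq T$ whose sums equal forbidden residues of the form $m - wd$ with $1 \le w < u$; in either sub-case $d \notin U$ or $d \in U$ the corresponding vector is a solution strictly smaller than $x$, contradicting indecomposability. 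In Case~II the exceptional support $T = \{a, m/2, a+m/2\}$ with $a \neq m/4$ forces $d = a = m/3$, hence $m = 6n$ and $x = (6n-4)\,e_{2n} + e_{3n} + e_{5n}$; then the observation that $2\,e_{2n} + e_{3n} + e_{5n}$ is already a solution $\le x$ together with minimality forces $6n - 4 = 2$, i.e.\ $m = 6$, which the hypothesis excludes.

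Combining the three widths gives $\varphi(m) + \varphi(m) + 0 = 2\,\varphi(m)$, as claimed. The most delicate step is the Case~I bookkeeping: one must confirm that \emph{two} of the eight subset sums land in the forbidden set $\{m-d, m-2d, \ldots, m-ud\}$, not just one, because for the sub-case $d \in U$ the single coincidence $U = S \cup \{d\}$ with $w = u-1$ would reproduce $x$ itself rather than a strictly smaller solution; the second forbidden residue then supplies an honest $v \le u-2$ and closes the argument. Beyond that, the proof is really just a marshalling of Examples~1 and~2 and the two cases already dispatched in the text.
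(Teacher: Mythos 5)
Your proposal is correct and follows essentially the same route as the paper: Corollary~\ref{c:extrs} limits the width to at most $3$, Examples~1 and~2 supply the $\varphi(m)+\varphi(m)$ solutions of widths $1$ and $2$, and the width-$3$ case is eliminated exactly as in the text via the normalization $d\mid m$, the relation $u+4=m$, and the Case~I/Case~II subset-sum dichotomy from Appendix~\ref{app:subs}, with the Case~I ``two forbidden residues'' point correctly identified as the crux (and your conclusion $6n-4=2$, hence $m=6$, even corrects the paper's slip ``$n=6$''). Nothing further is needed.
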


For $m = 6$ there are exactly two additional extremal solutions:
\mbox{$2\,e_2 + e_3 + e_5 = (0,2,1,0,1)$} and $e_1 + e_3 + 2\,e_4 = (1,0,1,2,0)$,
thus the number of extremal solutions is $2\, \varphi(6) + 2 = 6$.

From Theorem~\ref{th:extr} we derive a somewhat stronger version of the Eggleton-Erd\H{o}s bound:

\begin{kor}\label{c:sig3}
   Let $m \geq 3$, $m \neq 6$, and $x$ be an indecomposable solution of ${\bf (C}_m{\bf )}$
   of width $\sigma(x) \geq 3$. Then the total size is $\|x\|_1 + \sigma(x) \leq m$
   and the length is \mbox{$\|x\|_1 \leq m-3$}.
\end{kor}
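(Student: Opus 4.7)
The plan is to derive Corollary~\ref{c:sig3} as a direct consequence of Theorem~\ref{th:extr} together with Lemma~\ref{l:supp}(i), with essentially no new combinatorial work.

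First I would recall the general upper bound. By Lemma~\ref{l:supp}(i), every indecomposable solution $x$ of ${\bf (C}_m{\bf )}$ of width $s = \sigma(x)$ satisfies $\|x\|_1 \leq m - s + 1$, so the total size obeys $\|x\|_1 + \sigma(x) \leq m+1$. Hence either $\|x\|_1 + \sigma(x) \leq m$ (what we want), or $\|x\|_1 + \sigma(x) = m+1$, which by definition means $x$ is extremal.

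The next step is to rule out the extremal case under the hypotheses $m \geq 7$ and $\sigma(x) \geq 3$. Since $m \geq 7$ we are in the range where Theorem~\ref{th:extr} applies (we have $m \geq 3$ and $m \neq 6$), and that theorem asserts that every extremal solution has width $1$ or $2$. This directly contradicts $\sigma(x) \geq 3$, so $x$ cannot be extremal, giving $\|x\|_1 + \sigma(x) \leq m$.

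Finally, the length bound is an immediate arithmetic consequence: from $\|x\|_1 + \sigma(x) \leq m$ and $\sigma(x) \geq 3$ one obtains $\|x\|_1 \leq m - \sigma(x) \leq m - 3$. There is no real obstacle here — the only subtlety is making sure that the hypothesis $m \geq 7$ is exactly what is needed to invoke Theorem~\ref{th:extr} (which excludes $m = 6$, where genuinely exceptional extremal solutions of width $3$ exist, namely $2e_2 + e_3 + e_5$ and $e_1 + e_3 + 2e_4$).
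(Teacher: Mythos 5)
Your proof is correct and follows exactly the route the paper intends: the corollary is stated as an immediate consequence of Theorem~\ref{th:extr}, ruling out the extremal case $\|x\|_1 + \sigma(x) = m+1$ (available from Lemma~\ref{l:supp}(i) resp.\ Theorem~\ref{th:supp}) because for $m \geq 7$ extremal solutions have width at most $2$, and then $\|x\|_1 \leq m - \sigma(x) \leq m-3$. Your remark that $m \geq 7$ is precisely what excludes the exceptional case $m = 6$ is also on point.
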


\begin{kor}\label{c:ht}
   Let $m \geq 3$, $m \neq 6$, and $x$ be an extremal solution of ${\bf (C}_m{\bf )}$.
   Then the height is $\|x\|_{\infty} \geq m-2$.
\end{kor}

As an additional result we provide two upper bounds for the number of indecomposable
solutions.

\begin{theorem}\label{th:ell2}
   For $m \geq 4$ the number $\ell(m)$ of indecomposable solutions of $({\bf C}_m{\bf )}$
   satisfies
\[
    \ell(m) \leq \sum_{s=1}^{\text{\sf SD}(m)} {m-1 \choose {s, s-1, m-2s}},
\]
   a sum of trinomial coefficients, in particular $\ell(m) < 3^{m-1}$ for $m \geq 2$.
\end{theorem}

\subsection*{Application to zero-sum theory}

We translate the results into the language of zero-sum multisets. A multiset $S$ consists
of a supporting set $\supp(S)$ and an integer-valued function that assigns a multiplicity
$\mu(a) \in \N$ to each element $a \in \supp(S)$. If $\supp(S)$ is a finite subset of
a $\Z$-module (or additively written abelian group) $M$, then the multiset sum of
$S$ is
\[
     \Sigma(S) = x_1 a_1 + \cdots + x_n a_n \in M
\]
where $\supp(S) = \{a_1, \ldots, a_n\}$ and $x_i = \mu(a_i)$. The multiset
$S$ is called a zero-sum multiset if $\Sigma(S) = 0$, and it is minimal if
no proper nonempty submultiset has a zero sum. The size of $S$ is
\[
     \#S := \sum_{a \in \supp(S)} \mu(a) = \|x\|_1
\]
(the number of its elements counted according to their multiplicities),
the width of $S$ is $\sigma(S) = \#\supp(S) = \sigma(x)$ (the number of
different elements). The Eggleton-Erd\H{o}s bound is
\[
     \#S + \sigma(S) \leq m+1
\]
for a minimal zero-sum multiset $S$ in $\Z/m\Z$. In this context Theorem~\ref{th:extr}
and Corollary~\ref{c:sig3} read as follows:

\begin{kor}\label{c:zsms}
   Let $S$ be a minimal zero-sum multiset in $\Z/m\Z$.
\begin{proplist}
   \item If $\#S + \sigma(S) = m+1$, then $\sigma(S) \leq 2$ except when $m = 6$ and
      \mbox{$S = (1,3,4,4)$} or $S = (2,2,3,5)$.
   \item If $m \geq 3$, $m \neq 6$, and $\#S + \sigma(S) = m+1$, then $S$ contains
      an element $a$ of multiplicity $\mu(a) \geq m-2$.
   \item If $m \geq 7$ and the width is at least $\sigma(S) \geq 3$, then the size
      is bounded by $\# S \leq m - \sigma(S)$.
\end{proplist}
\end{kor}
In other words, ``broad'' (= large width) minimal zero-sum multisets are ``short''
(= small size). Or ``long'' minimal zero-sum multisets are ``narrow''. Some
different but unrelated results along these lines are in \cite{YL, ZYL}.

\subsection*{Application to invariant theory}

Let $k$ be a field that contains a primitive $m$-th root of unity, in
particular $\charac k \not| \: m$. Then each representation of the cyclic
group $G = \mathcal{Z}_m$ of order $m$ is diagonalizable: We find a basis
such that the corresponding operation on the polynomial ring
$k[X] = k[X_1,\ldots,X_n]$ is given by the formula
\[
   A \cdot X_i = \varepsilon^{a_i} X_i \quad \text{for } i = 1, \ldots, n,
\]
with suitable $a_i \in \Z$, $0 \leq a_i \leq m-1$, where $A$ is a fixed
generator of the cyclic group $G$ and $\varepsilon$, a fixed
primitive $m$-th root of unity.

A polynomial $f = \sum_{\nu \in \N^n} c_{\nu} X^{\nu}$---with the usual compact
notation \mbox{$X^{\nu} = X_1^{\nu_1} \cdots X_n^{\nu_n}$} for the
multidegrees $\nu = (\nu_1, \ldots, \nu_n)$---transforms to
\[
   A \cdot f = \sum_{\nu \in \N^n} \varepsilon^{(a|\nu)} c_{\nu} X^{\nu},
\]
where $(a|\nu) = a_1 \nu_1 + \cdots + a_n \nu_n$. Thus $f$ is invariant
if and only if it has only monomials with $(a|\nu) \equiv 0 \pmod m$. A
minimal system of generators of the invariant algebra therefore consists
exactly of the monomials $X^{\nu}$ for which $\nu$ is an indecomposable
solution of the congruence {\bf (A)}. These generators are usually called
the fundamental invariants. For a monomial $X^{\nu}$ the length
$\|\nu\|_1$ is the total degree $\deg X^{\nu}$, and the width $\sigma(\nu)$
counts the different variables $X_i$ that occur in $X^{\nu}$.

For simplicity we consider the special case $n = m-1$, $a_i = i$, the
``standard representation'' that corresponds to the standard congruence
${\bf (C}_m{\bf )}$. The Eggleton-Erd\H{o}s bound restricts the degrees of
the fundamental invariants to $\deg X^{\nu} + \sigma(\nu) \leq m+1$.
Theorem~\ref{th:extr} and Corollary~\ref{c:sig3} yield:

\begin{kor}
   Let $f = X^{\nu}$ be a fundamental invariant of the standard representation
   of the cyclic group $G$.
\begin{proplist}
   \item If $\deg X^{\nu} + \sigma(\nu) = m+1$, then $\sigma(\nu) \leq 2$
      except when $m = 6$ and $f = X_1 X_2 X_4^2$ or $f = X_2^2 X_3 X_5$.
   \item If $m \geq 3$, $m \neq 6$, and $\deg X^{\nu} + \sigma(\nu) = m+1$,
      then the monomial $f$ contains a variable $X_i$ of degree $\nu_i \geq m-2$.
   \item If $m \geq 7$ and $f$ contains $s \geq 3$ different variables, then
      its total degree is bounded by $\deg f \leq m - \sigma(\nu)$.
\end{proplist}
\end{kor}

Moreover Theorem~\ref{th:extr} describes the $2\cdot \varphi(m)$ ``extremal''
invariants.

The number $\ell(m)$ of Theorem~\ref{th:ell2} also counts the fundamental invariants. Hence
Theorem~\ref{th:ell2} provides upper bounds for the size of the system of fundamental
invariants, called the embedding dimension of the invariant algebra. This has even
an application to the classical invariant theory of binary forms ($SL_2$-invariants),
see \cite{Kac} or \cite{DEN}.

\section{Zerofree Subsets}\label{s:zfs}

Call a subset $T \subseteq \Z/m\Z$ zerofree if no sum $\Sigma(U)$,
$U \subseteq T$, $U \neq \emptyset$, is $0$ in $\Z/m\Z$. (Note that
$\Sigma(\emptyset) = 0$.) Let $\Delta(T)$ be the number of different
subset sums $\Sigma(U)$ in $\Z/m\Z$ (including $0$).

\begin{lemma}\label{l:E-E}
   Let $T \subseteq \Z/m\Z$ be zerofree with $r$ elements.
\begin{proplist}
   \item $\Delta(T) \geq 2r$.
   \item If $r \geq 4$, then $\Delta(T) \geq 2r + 1$.
\end{proplist}
\end{lemma}
\begin{proof}
   See Theorems~4 and 5 of \cite{E-E}. (Note that in \cite{E-E} the empty sum $0$
   is not counted.)
\end{proof}

\begin{description}
   \item[Note] Olson \cite{Ol75} has the stronger bound $\Delta(T) \geq r^2/9$
      for $m$ large enough. We don't make use of this bound in the proof of
      Theorem~\ref{th:extr}.
\end{description}

We need a more concrete version of Lemma~\ref{l:E-E} for the case $r = 3$
and begin with two auxiliary lemmas.

\begin{lemma}\label{l:r3a}
   Let $T = \{t_1, t_2, t_3\} \subseteq \Z/m\Z$ be
   zerofree.
\begin{proplist}
   \item The five subset sums $0$, $t_1$, $t_2$, $t_3$, $t_1 + t_2 + t_3$
      are different.
   \item Assume $\{i, j, k\} = \{1, 2, 3\}$. Then the sum $t_i+t_j$ equals some other
	    subset sum of $T$ if and only if $t_i + t_j = t_k$.
   \item $\Delta(T) = 8 - s$ where $s$ is the number of true equations in
      the system
\begin{align}
     \label{eq:3_1}   t_1 + t_2 & \stackrel{?}{=} t_3 \\
     \label{eq:3_2}   t_1 + t_3 & \stackrel{?}{=} t_2 \\
     \label{eq:3_3}   t_2 + t_3 & \stackrel{?}{=} t_1
\end{align}
\end{proplist}
\end{lemma}
\begin{proof}
   (i) is trivial.

   (ii) The subset sum $t_i + t_j$ is different from $0$, $t_i$, $t_j$, $t_i + t_k$,
	 $t_j + t_k$, and $t_i + t_j + t_k = \Sigma(T)$. The only remaining possibility
	 is $t_i + t_j = t_k$.

   (iii) By (ii) the equations (\ref{eq:3_1})--(\ref{eq:3_3}) describe the only way
   a two-element sum might equal any of the other subset sums.
\end{proof}

Lemma~\ref{l:E-E} (i) implies that at least one of (\ref{eq:3_1})--(\ref{eq:3_3})
must be false. This can also be seen directly.

For even $m$ consider the sets
\[
     T(m,a) := \{a,\: \frac{m}{2},\: \frac{m}{2} + a\}
     \quad\text{where } 1 \leq a < \frac{m}{2},\: a \neq \frac{m}{4}\,.
\]

\begin{lemma}\label{l:Tma}
   $T(M,a)$ is zerofree as a subset of $\Z/m\Z$, and $\Delta(T(m,a)) = 6$.
\end{lemma}
\begin{proof}
   The eight subset sums $\Sigma(U)$ are
\[
     0,\: a,\: \frac{m}{2}, \: \frac{m}{2} + a,\: a + \frac{m}{2},\:
     \frac{m}{2} + 2a,\: \frac{m}{2} + \frac{m}{2} + a = a,\: 2a,
\]
   six of which are different (since $a \neq m/4$).
\end{proof}

\begin{lemma}\label{l:r3b}
   Let $m \geq 6$ and $T \subseteq \Z/m\Z$ be zerofree with \mbox{$\# T = 3$}.
	 Then $\Delta(T) \geq 6$, and the following statements are equivalent:
   \begin{proplist}
      \item $\Delta(T) = 6$.
      \item $m$ is even, and $T = T(m,a)$ for some $a$ with $1 \leq a < m/2$,
			   $a \neq \frac{m}{4}$.
   \end{proplist}
\end{lemma}
\begin{proof}
   Let $T = \{t_1, t_2, t_3\}$. By Lemma~\ref{l:E-E} (i) $\Delta(T) \geq 6$.
   In the case of equality exactly two of the conditions (\ref{eq:3_1})--(\ref{eq:3_3})
   must be true. Without loss of generality we may assume (\ref{eq:3_1}) and
   (\ref{eq:3_2}) are true. Then $t_1 + t_2 = t_3 = t_2 - t_1$, hence
   $2t_1 = 0$, thus $t_1 = m/2$. Assume $t_2 < t_3$ (as integers), and let
   $a := t_2$. Then $t_3 = m/2 + a$.
\end{proof}

\section{Extremal Solutions of Width $2$}\label{s:wtwo}

We consider the congruence
\[
      {\bf (A_2)}\qquad\qquad\qquad\qquad\qquad   ax + by  \equiv  0  \pmod m.
      \qquad\qquad\qquad\qquad\qquad\qquad
\]
for two unknown integers $x, y \in \N$, assuming that $m \in \N_3$, $a, b \in \N$,
and $a \not\equiv b \pmod m$. For an indecomposable solution $(x,y)$ with $x \neq 0$,
$y \neq 0$, the width is $\sigma(x,y) = 2$, and the size is $x+y \leq m-1$. The
next lemma characterizes the extremal ones among them:

\begin{lemma}\label{l:extr2}
   Assume that $m \in \N_3$, $a, b \in \N$, and $a \not\equiv b \pmod m$.
   Let \mbox{$(x,y) \in \N^2$} be an indecomposable solution of ${\bf (A_2)}$ with
   $x \neq 0$, $y \neq 0$, and $x + y = m-1$. Then one of the following
   statements is true:
\begin{proplist}
   \item $x = m-2$, $y = 1$, $\gcd(m,a) = 1$, and if $c$ is the $\bmod\,m$-inverse
      of $a$ (i.\,e. $ca \equiv 1 \pmod m$), then $cb \equiv 2 \pmod m$.
   \item $x = 1$, $y = m-2$, $\gcd(m,b) = 1$, and if $c$ is the $\bmod\,m$-inverse of $b$,
      then $ca \equiv 2 \pmod m$.
\end{proplist}
\end{lemma}

\begin{description}
   \item[Remark] The two items (i) and (ii) describe the same set of cases, only
      with the denotations of $a$ and $b$ interchanged. The second statements for
      both cases follow directly, since (for instance)
      \mbox{$0 \equiv c\,(ax+by) \equiv m-2 + cb \pmod m$}.
   \item[Example] If $a = 1$, $b = 2$, then $(m-2,1)$ is an extremal solution.
      This is essentially the only example: Lemma~\ref{l:extr2} tells us that by the
      action of the mutliplicative group $\bmod\,m$ we get all extremal solutions for
      a fixed module $m$ and varying coefficients $a$ and $b$.
\end{description}

For the proof of Lemma~\ref{l:extr2} we use some auxiliary lemmas:

\begin{lemma}\label{l:red2}
   Let $m \in \N_2$ and $a, b \in \N$.
\begin{proplist}
   \item Let $d := \gcd(m,a)$ and $d' := \gcd(d,b)$, $d = d'e$. Then for
      $(x,y) \in \N_2$ the following two statements are equivalent:
      \begin{enumerate}
         \item $(x,y)$ is an indecomposable solution of ${\bf (A_2)}$.
         \item $x < \frac{m}{d}$, $e|y$, and $(x, \frac{y}{e})$ is an indecomposable solution
            of
\[
     \frac{a}{d}\cdot s + \frac{b}{d'}\cdot t \equiv 0 \pmod{\frac{m}{d}}.
\]
      \end{enumerate}

   \item If $a$ and $m$ are coprime, then the indecomposable solutions of ${\bf (A_2)}$
      are exactly the same as for $1\cdot x + b'\cdot y \equiv 0 \pmod m$ where
       $c$ is the inverse of $a$ modulo $m$, and $b' = bc \bmod m$.
\end{proplist}
\end{lemma}
\begin{proof}
   (i) If $x \geq \frac{m}{d}$,then $(\frac{m}{d},0)$ is a solution $\leq (x,y)$.
   If $ax + by = km$, then $d|by$, $e|\frac{b}{d'}\,y$, hence $e|y$.
   Thus
\[
     ax + by = km \Leftrightarrow \frac{a}{d}\,x + \frac{b}{d'}\,\frac{y}{e} = k\,\frac{m}{d}.
\]
   Therefore the mapping $(x,y) \mapsto (x, \frac{y}{e})$ is a bijection between
   the respective sets of solutions, and obviously it preserves the indecomposability
   (in both directions).

   (ii) We have $ac \equiv 1 \pmod m$, thus
   $ax + by = km \Leftrightarrow x + bcy = kcm$.
\end{proof}

\begin{lemma}\label{l:red3}
   Assume that $m \in \N_4$, $a, b \in \N$, $a \not\equiv b \pmod m$, and ${\bf (A_2)}$
   has an indecomposable solution $(x,y) \in \N^2$ with $x \neq 0$, $y \neq 0$, and
   $x + y = m-1$. Then at least one of $a$ and $b$ is relative prime with $m$.
\end{lemma}
\begin{proof}
   Assume that $d = \gcd(m,a) > 1$, and let $d' = \gcd(d,b)$ and $e = d/d'$.
   Then by Lemma~\ref{l:red2} (i) we have $e|y$, and $(x,y')$
   is an indecomposable solution of $a's + b't \equiv 0 \pmod{m'}$ where $y' = y/e$,
   $a' = a/d$, $b' = b/d'$, and $m' = m/d$. If $a' \not\equiv b' \pmod{m'}$, then
\[
     x + \frac{y}{e} \leq \frac{m}{d} - 1, \quad x + y \leq dx + d'y \leq m - d < m-1,
\]
   contradiction. Otherwise $a' \equiv b' \pmod{m'}$, thus $x + y' = m'$, and
\[
     x + y \leq ex + y = \frac{m}{d/e}\,.
\]
   Since $m \geq 4$ this is possible only if $e = d$ and $e = 2$, thus $d' = 1$,
   $b' = b$. Then $2b = 2b' \equiv 2a' = a \pmod m$, and $\gcd(m,2b) = \gcd(m,a) = 2$,
   hence $\gcd(m,b) = 1$.
\end{proof}

\begin{lemma}\label{l:m+b}
  Let $m \in \N_1$, $b \in \N$. The assignment $(s,t) \mapsto (s,t+u)$
  with $u = \frac{s+bt}{m}$ defines a bijection between
\begin{proplist}
  \item the set of indecomposable solutions of
    $s + bt \equiv 0 \pmod m$

  \item and the set of indecomposable solutions of
    $x + by \equiv 0 \pmod{m+b}$ except $(m+b,0)$.
\end{proplist}
\end{lemma}
\begin{proof}
   See \cite{Tin1} (or \cite[Prop. 1]{KP_LC2}).
\end{proof}

\begin{lemma}\label{l:a=1}
   Let $m \geq 3$, $a = 1$, $2 \leq b \leq m-1$, and $(x,y)$ an indecomposable solution
	 of $ax + by \equiv 0 \pmod m$, where $x \neq 0$, $y \neq 0$, $x+y = m-1$.
	 Then
\begin{proplist}
   \item $b = 2$, $x = m-2$ and $y = 1$,
   \item or $m=5$, $b=3$, $x=1$, $y=3$.
\end{proplist}
\end{lemma}
\begin{proof}
   We prove Lemma~\ref{l:a=1} by induction on $m$ using Lemma~\ref{l:m+b}.
	 If $m = 3$, then $b = 2$. All indecomposable solutions are $(3,0)$, $(1,1)$, $(0,3)$.
	 Hence $x = 1 = m-2$ and $y = 1$.

   Now we assume that $m \geq 4$. Since $(x,y)$ is not the solution $(m,0)$, by
	 Lemma~\ref{l:m+b} it has the form $(s, t+u)$ where $(s,t)$ is an indecomposable
	 solution of $s + bt \equiv 0 \pmod{m-b}$ with $s = x \neq 0$,
	 $s + t = x + y - u = m - 1 - u$,
	 and
\[
     u = \frac{s+bt}{m-b}.
\]

   {\bf Case I,} $t = 0$. Then $(s,t) = (m-b,0)$, $u = 1$, $(x,y) = (m-b, 1)$,
	 $m-1 = x + y = m - b + 1$, $b = 2$, $(x,y) = (m-2, 1)$, as asserted.

   {\bf Case II,} $t \geq 1$. Then $s < m-b$, $u < [(m-b) + b\,(m-b)]/(m-b) = 1 + b$,
	 hence $u \leq b$, and $s + t = m - 1 - u \geq m - b - 1$.

   {\bf Case IIa,} $b \equiv 0 \pmod{m-b}$. Since $s = x \neq 0$, the solution $(s,t)$
	 must be equal to $(m-b, 0)$, hence we are back in case I.

   {\bf Case IIb,} $b \equiv 1 \pmod{m-b}$. Then $(s,t)$ is an indecomposable solution
	 of $s + t \equiv 0 \pmod{m-b}$, hence $s + t = m-b$. From
\[
     m - 1 = x + y = s + t + u = m - b + u
\]
   we conclude that $u = b-1$. Using $s = m-b-t$ we conclude that
\[
     u = \frac{s + bt}{m-b} = \frac{m-b-t + bt}{m-b} = 1 + \frac{(b-1)\,t}{m-b}
       = 1 + \frac{ut}{m-b}\,,
\]
\[
     1 = u - \frac{ut}{m-b} = \frac{(m-b-t)\,u}{m-b} = \frac{su}{m-b}\,.
\]
   Therefore $s = m-b-t\:|\:m-b$, and $t \:|\:m-b$. Since $s + t = m-b$, and
   both $s,t \geq 1$, this is possible only if
\[
     s = t = \frac{m-b}{2}\,.
\]
   Then $u = (m-b)/s = 2$, $b = u+1 = 3$, and $b \equiv 1 \pmod{m-b}$ enforces
   one of
\begin{itemize}
   \item $m-b = 1$, contradicting $2 \:|\:m-b$, or
   \item $m-b = 2$, $m = 5$, $s = t = 1$, $x  = 1$, $y = 3$.
\end{itemize}

   {\bf Case IIc,} $b \not\equiv 0, 1 \pmod{m-b}$. Then $s + t \leq m - b - 1$, hence
	 $= m - b - 1$, and $u = b$. In this case we may apply the induction hypothesis and
	 get $s = m-b-2$, $t = 1$, $b = u = (m-2)/(m-b)$, $b\,(m-b) = m-2$,
	 $(b-1)\,m = b^2 -2$,
\[
     2 \equiv b^2 = (b-1+1)^2 = (b-1)^2 + 2\,(b-1) + 1 \equiv 1 \pmod{b-1},
\]
   hence $b-1 = 1$, $b = 2$, $m = b^2 - 2 = 2$, contradiction.
\end{proof}

For the proof of Lemma~\ref{l:extr2} we may assume without loss of generality
that $a, b \in \{0,\ldots,m-1\}$ and, by Lemma~\ref{l:red3}, that $a$ is
relative prime with $m$, by Lemma~\ref{l:red2} (ii), that $a = 1$.
Then we are in the situation of Lemma~\ref{l:a=1}, and thus the proof of
Lemma~\ref{l:extr2} is complete.

\setcounter{kor}{0}
\begin{kor}
   The congruence ${\bf (A_2)}$ admits an extremal solution if and only if $a$ is
   coprime with $m$ and $b \equiv 2a \pmod m$, or $b$ is coprime with $m$ and
   $a \equiv 2b \pmod m$. This extremal solution, $(m-2,1)$ or $(1,m-2)$, is unique.
\end{kor}

\begin{kor}
   If $x$ is an extremal solution of ${\bf (C}_m{\bf )}$ and $\sigma(x) = 2$
   (hence \mbox{$m \geq 3$}), then \mbox{$x = (m-2)\,e_i + e_j$}
   where $i$ is coprime with $m$ and \mbox{$j \equiv 2i \pmod m$}.
   There are exactly $\varphi(m)$ extremal solutions of width two.
\end{kor}

\section{Proof of Theorem~\ref{th:extr}}\label{s:proof}

\begin{lemma}\label{l:extrs}
   Let $m \geq 4$ and $x$ be an extremal solution of ${\bf (C}_m{\bf )}$. Then
\begin{proplist}
   \item there is exactly one index $j$ with $x_j \geq 2$,
   \item $\sigma(x) \leq 3$.
\end{proplist}
\end{lemma}
\begin{proof}
   Let $s = \sigma(x)$. Then $2s \leq m$ by the remark in
   Section~\ref{s:indsol}. Let
\[
                      y  =  \sum_{i\in\supp(x)} e_i.
\]
  If $x = y$, then there is no coordinate $\geq 2$, hence all $x_i \leq 1$.
  Then \mbox{$\|x\|_1 = \sigma(x) = s$}, hence $2s = m+1$, contradiction.

  Otherwise, since the $2^s$ weights $\alpha(u)$ for $0 \leq u \leq y$ are
	subset sums of $T = \supp(x)$, by Lemma~\ref{l:E-E} (i) they represent
  at least $2s$ different residue classes $\bmod\,m$. In each chain
\[
   0 < u^{(1)} < \ldots < u^{(s)} = y < u^{(s+1)} < \ldots < u^{(m-s+1)} = x
\]
  there remain only $m-2s$ possible values $\alpha(u^{(j)})$ for the $m-2s$
  indices $j$ with $s+1 \leq j < m-s+1$.
  So if we exchange a single element of the chain between $y$ and $x$,
  the weights of the old and of the new element must coincide.
  Now assume that $x_i \geq 2$ and $x_j \geq 2$ with $i \neq j$. Then
  $y + e_i + e_j \leq x$, and for the intermediate step between $y$ and
  $y + e_i + e_j$ we have the
  two choices $y + e_i$ and $y + e_j$. Hence $\alpha(y+e_i) \equiv \alpha(y+e_j)$.
  This implies $i = \alpha(e_i) \equiv \alpha(e_j) = j$, whence $i = j$,
  and (i) is proved.

  Moreover for $s \geq 4$ the values $\alpha(u)$ in the previous paragraph represent
  at least $2s + 1$ different residue classes by Lemma~\ref{l:E-E} (ii), leaving not
  enough room for the weights of the vectors between $y$ and $x$, contradiction.
  This proves (ii).
\end{proof}

We now prove Theorem~\ref{th:extr}.

For an extremal solution $x$ we denote the one coordinate $x_j \geq 2$ by $u$,
all other coordinates are $x_i \leq 1$. Multiplying the congruence ${\bf (C}_m{\bf )}$
by a number that is relatively prime with $m$ (and reducing the coefficients $\bmod\,m$)
doesn't change the solutions (up to a permutation of the indices $1, \ldots, m-1$)
nor their widths or lengths. Therefore we may assume that $j = d \,|\,m$,
see \cite{KP_OMGr}. In this situation ${\bf (C}_m{\bf )}$ has the form
\[
     d\cdot u + \Sigma(S) \equiv 0 \pmod m
\]
where $S \subseteq \{1, \ldots, m-1\} - \{d\}$ and $\Sigma(S) = \sum_{i\in S} i$
is the sum of the elements of $S$. Thus
\[
     x = u\cdot e_d + \sum_{i\in S} e_i.
\]
Let $s := \# S$ be the size of $S$, so $\sigma(x) = s+1$. Since the cases $\sigma(x) = 2$
and $\sigma(x) \geq 4$ are settled by Lemmas~\ref{l:extr2} and \ref{l:extrs} we may
assume that $\sigma(x) = 3$, or $s = 2$. Since $\|x\|_1 = u + s$ and $\sigma(x) = 1 + s$,
the extremality condition translates to the equation \mbox{$m + 1 = u + 2 + 1 + 2$}, or
\[
     u + 4 = m
\]
(and thus $m \geq 6$). We have $u < m' := m/d$ for otherwise the solution
$m'\, e_d < x$ contradicts the minimality of $x$. In particular
\[
     du < m.
\]
(By the way this implies that $d \leq 2$.)
We may shrink the potential range of $S$ due to the observation
\[
     m - wd \not\in S \quad\text{for } w = 1 \ldots, u,
\]
for otherwise $ m -wd \in S$ makes $w\,e_d  + e_{m-wd}$ a solution that is $< x$ except
in the case $w = u$ and $m - ud = d$---but then also $m - wd = d \not\in S$.

Now we consider the set
\[
     R := \{0, \ldots, m-1\} - \{m - wd\:|\: 1 \leq w \leq u\}
\]
with $S \subseteq R - \{0, d\}$ (note that maybe $d \in R$ and that the
removed elements $m - wd$ are multiples of $d$). Its size is $\# R = m - u = 4$.
Let $T := S \cup \{d\}$ (that is, $T = \supp(x)$). Then $r := \# T = s+1 = 3$,
$2r \leq m$. If we let $U$ run through all the $8$ subsets of $T$, then by
Lemma~\ref{l:r3b} one of the following two statements must be true:

\begin{enumerate}
   \item $T$ is not zerofree, there is a subset $U \subseteq T$, $U \neq \emptyset$,
      with $m\,|\,\Sigma(U)$.
   \item The sums $\Sigma(U)$ represent $\geq 6$ different residue classes $\bmod\,m$,
      even $\geq 7$ different classes, except when $T$ is one of the exceptional sets
      $T(m,a)$.
\end{enumerate}
Statement 1 makes $\sum_{i \in U} e_i$ a solution of ${\bf (C}_m{\bf )}$
that is $\leq e_d + \sum_{i \in S} e_i < x$, contradiction.
Hence statement 2 is true.

{\bf Case I,} the $\Sigma(U)$ represent at least seven classes, thus at least three
outside of $R$. Then at least two have the form $\Sigma(U) \equiv m-wd \pmod m$
with $1 \leq w < u$. If $d \not\in U$, then $U \subseteq S$, and
$w\,e_d + \sum_{i \in U} e_i$ is a solution $< u\,e_d + \sum_{i \in S} e_i = x$
of ${\bf (C}_m{\bf )}$, contradiction.

If however $d \in U$, then
\[
     y = w\,e_d + \sum_{i \in U} e_i = (w+1)\,e_d + \sum_{i \in U-\{d\}} e_i
\]
is a solution $\leq x$. The minimality of $x$ enforces $w\,e_d + \sum_{i \in U} e_i = x$,
that is $w = u-1$, and $U = S \cup \{d\}$. But there is yet another residue class
outside of $R$ of the form $\Sigma(V) \equiv m - vd$ with $1 \leq v < u$,
$v \neq w = u-1$, hence $v \leq u-2$. Thus $v\,e_d + \sum_{i \in V} e_i$ is a
solution $< x$, contradiction.

{\bf Case II,} $T = \{a, m/2, a + m/2\}$ with $1 \leq a < m/2$
and $a \neq m/4$. In particular $m$ is even and $d = a$:
\begin{quote}
   We know that $d \in T$. Since $d \leq m/2$, $d$ cannot be $a + m/2$.
   The assumption $d = m/2$ implies
\[
     x = e_a + (m-4)\,e_{m/2} + e_{a + m/2},
\]
\[
     0 \equiv \alpha(x) = a + \frac{m}{2}\cdot(m-4) + a + \frac{m}{2} \equiv 2a + \frac{m}{2}.
\]
   Since $0 < a < m/2$, this implies $2a = m/2$, contradicting \mbox{$a \neq m/4$}.
\end{quote}
Since $T = S \cup \{d\}$ we conclude that $S = \{m/2, a+m/2\}$ and
\[
     x = (m-4)\,e_a + e_{m/2} + e_{a+m/2},
\]
\[
     0 \equiv \alpha(x) = a\cdot (m-4) + \frac{m}{2} + a + \frac{m}{2} \equiv -3a,
\]
hence $3a = m$, $d = a = m/3$, and $m$ is a multiple of $6$, say $m = 6n$. Then $u = 6n - 4$,  
$a = 2n$, $S = \{3n, 5n\}$,
\[
     x = (6n-4)\,e_{2n} + e_{3n} + e_{5n}.
\]
Since $2\cdot 2n + 3n + 5n = 12 n$ the vector $2\,e_{2n} + e_{3n} + e_{5n}$ is
a solution $\leq x$, hence $= x$, $6n - 4 = 2$, $n = 1$.

This finishes the proof of Theorem~\ref{th:extr} and by the way identifies the
exceptions for $m = 6$.

\section{Alternative Proof of Theorem~\ref{th:extr}}\label{s:proof}

We give an alternative proof that is much shorter but uses the deep results of
\cite{SaCh} and \cite{Yuan} on Elashvili's conjecture:

\begin{description}
   \item[ESCY Theorem] (Elashvili, Savchev/Chen, Yuan) If $x$ is an indecomposable
      solution of ${\bf (C}_m{\bf )}$ of length \mbox{$\|x\|_1 \geq \lfloor m/2 \rfloor + 2$},
      then the index of $x$ is $1$.

   \item[Remark] The multiplicative group $G = (\Z/m\Z)$ of order $\varphi(m)$ acts
      in a natural way by permuting the indices $1, \ldots, m-1$, hence permutes the
      solutions of ${\bf (C}_m{\bf )}$ as well as the indecomposable solutions.
      The weight $\alpha(x)$ of a solution is a multiplie of $m$, and the index
      is the minimum of $\alpha(u)/m$ where $u$ ranges over the $G$-orbit of $x$.
\end{description}

\begin{lemma}\label{l:lev1}
   Let $m \geq 3$ and $x$ be an extremal solution of ${\bf (C}_m{\bf )}$ of weight
   $\alpha(x) = m$. Then $x = m e_1$ or $x = (m-2)\,e_1 + e_2$.
\end{lemma}
\begin{proof}
   Let $s = \sigma(x)$. If $s = 1$, then we have $\|x\|_1 = m$, $x = m e_i$,
   \mbox{$m = \alpha(x) = mi$}, hence $i = 1$, $x = m e_1$.

   Now assume $s \geq 2$ and
\[
     \supp(x) = \{i_1, \ldots, i_s\} \quad\text{with } 1 \leq i_1 \leq \ldots \leq i_s \leq m-1.
\]
   In particular $i_{\nu} \geq \nu$ for $\nu = 1, \ldots, s$. Extremality means
\[
     \sum_{\nu=1}^s x_{i_{\nu}} = \|x\|_1 = m + 1 - s.
\]
   From the chain
\begin{align*}
     m & = \alpha(x) = \sum_{\nu=1}^s i_{\nu} x_{i_{\nu}} \geq \sum_{\nu=1}^s \nu x_{i_{\nu}}
         = \sum_{\nu=1}^s x_{i_{\nu}} + \sum_{\nu=1}^s (\nu-1)\,x_{i_{\nu}} \\
       & = m - (s-1) + \sum_{\nu=2}^s (\nu-1)\,x_{i_{\nu}} \geq m - (s-1) + (s-1) = m
\end{align*}
   of equalities and inequalities we conclude that
\[
     \sum_{\nu=2}^s (\nu-1)\,x_{i_{\nu}} =  s - 1,
\]
   which is possible only if $s = 2$ and $x_{i_2} = 1$. Set $i_1 = i$ and $i_2 = j$.
   Since $x_j = 1$ and $m-1 = \|x\|_1 = x_i + x_j$ we have $x_i = m-2$, thus
   $x = (m-2)\,e_i + e_j$ and $\alpha(x) = i\cdot(m-2) + j$. The case $m = 3$ being settled
   we may assume that $m \geq 4$. Then necessarily $i = 1$ and consequently $j = 2$.
\end{proof}

\setcounter{kor}{0}
\begin{kor}
   If $m \geq 3$ and $x$ is an extremal solution of ${\bf (C}_m{\bf )}$ of index one,
   then $x$ has one of the forms
\begin{proplist}
   \item $x = m e_i$ where $i$ is coprime with $m$,
   \item $x = (m-2)\,e_i + e_j$ where $i$ is coprime with $m$ and $j = 2i \bmod m$.
\end{proplist}
\end{kor}
\begin{proof}
   These are the elements in the $G$-orbits of $x = m e_1$ and \mbox{$x = (m-2)\,e_1 + e_2$}.
\end{proof}

From this result we derive the alternative proof of Theorem~\ref{th:extr}:

Let $x$ be an extremal solution of ${\bf (C}_m{\bf )}$, and $s = \sigma(x)$. Then
the length of $x$ is $\|x\|_1 = m + 1 - s$, and
\begin{equation}\label{eq:mhalf}
     m+1-s \geq \left\lfloor \frac{m}{2} \right\rfloor + 2 \quad\Longleftrightarrow\quad
     s \leq m - \left\lfloor \frac{m}{2} \right\rfloor - 1 = \left\lceil \frac{m}{2} \right\rceil - 1.
\end{equation}
If $m$ is odd, then $\lceil m/2 \rceil - 1 = \lfloor m/2 \rfloor$, hence (except for
the trivial case $m = 3$) the condition in (\ref{eq:mhalf}) is satisfied by the
Remark in Section~\ref{s:indsol}. The ESCY Theorem applies and settles
Theorem~\ref{th:extr} for this case.

If $m$ is even, then $\lceil m/2 \rceil - 1 = m/2 - 1$, and by the same reasoning we are
done except in the case $s = m/2$. In this case $\|x\|_1 = 1 + m/2 = \sigma(x) + 1$, and
$x$ has one coordinate $x_i = 2$, all other coordinates $x_j = 1$ or $0$ (for $j \neq i$).
Lemma~\ref{l:extrs} (ii) implies that $s \leq 3$, and we are done except when $s = 3$,
thus $m = 6$.

The alternative proof of Theorem~\ref{th:extr} is complete.

\section{An Upper Bound for the Number of Indecomposable Solutions}\label{uppbnd}

Let $\ell(m)$ be the number of indecomposable solutions of the standard linear
congruence ${\bf (C}_m{\bf )}$. Since \cite{DEN} gives a lower bound we'll look for an
upper bound only.

To apply Theorem~\ref{th:extr} and its Corollary~\ref{c:sig3} we assume that
\mbox{$m \geq 4$}. Then the support of an indecomposable solution
has at most ${\text{\sf SD}(m)}$ elements. For each
$s \in \{1,\ldots,{\text{\sf SD}(m)}\}$ we have exactly ${m-1 \choose s}$
choices for an $s$-element subset $S = \{i_1,\ldots,i_s\} \subseteq \{1,\ldots,m-1\}$
that serves as support.

Proposition~\ref{p:n=1} says that the number of indecomposable solutions of
width $s = 1$ is
\[
     m-1 = \frac{(m-1)!}{1!\cdot 0!\cdot (m-2)!}
         = {m-1 \choose {1, 0, m-2}}.
\]

For $s = 2$ we have ${m-1 \choose 2} = (m-1)(m-2)/2$ choices for $S$. Let
$S = \{i, j\}$ with $1 \leq i < j \leq m-1$. The number of indecomposable solutions
with support in $\{i, j\}$ is $\leq m-1$, see \cite{Tin1, KP_LC2}. This number includes
the two solutions with one-element support $\{i\}$ or $\{j\}$. Thus the number
of indecomposable solutions with support $\{i, j\}$ is $\leq m-3$.
Therefore the number of indecomposable solutions of width $s = 2$ is
\[
     \leq \frac{(m-3)(m-1)(m-2)}{2} = \frac{(m-1)!}{2!\cdot 1!\cdot (m-4)!}
         = {m-1 \choose {2, 1, m-4}}.
\]

For $s \geq 3$ every indecomposable solution $x$ with support $S$ has
\mbox{$\|x\|_1 \leq m-s$} by Corollary~\ref{c:sig3} of Theorem~\ref{th:extr},
except when $x$ is one of the two exceptional solutions with $\sigma(x) = 3$
for $m=6$. We catch all the other ones by choosing arbitrary
$y_1, \ldots, y_{s-1} \geq 0$ with \mbox{$y_1+\cdots+y_{s-1} \leq m-2s$},
defining \mbox{$x_{i_{\nu}} = y_{\nu}+1$}, and choosing $x_{i_s}$ appropriately,
that is, minimal such that $m\,|\,\alpha(x)$. The number of such choices is
\mbox{${m-2s+s-1 \choose s-1} = {m-s-1 \choose s-1}$}. This proves
(for $m \geq 7$):

\begin{lemma}\label{l:nsupp}
   Let $m \geq 6$ and $s \geq 3$. Let $S \subseteq \{1,\ldots,m-1\}$ be an $s$-element
   subset. Then $S$ supports at most ${m-s-1 \choose s-1}$ indecomposable solutions
   of ${\bf (C}_m{\bf )}$.
\end{lemma}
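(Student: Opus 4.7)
The plan is to formalize the counting sketch given in the paragraph just before the lemma. Fix an $s$-element support $S = \{i_1, \ldots, i_s\}$ and parametrize each indecomposable solution $x$ of ${\bf (C}_m{\bf )}$ with $\supp(x) = S$ by its tuple $(x_{i_1}, \ldots, x_{i_{s-1}})$. I would show (a) each such tuple extends to at most one value of $x_{i_s}$ giving an indecomposable solution, and (b) the set of tuples that can actually arise has size at most $\binom{m-s-1}{s-1}$.

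For (a), suppose two values $x_{i_s} > x'_{i_s} \geq 1$ both extended the same prefix $(x_{i_1}, \ldots, x_{i_{s-1}})$ to solutions of ${\bf (C}_m{\bf )}$. Then $i_s(x_{i_s} - x'_{i_s}) \equiv 0 \pmod{m}$, so replacing $x_{i_s}$ by $x'_{i_s}$ in $x$ yields a strictly smaller positive solution with support in $S$, contradicting the indecomposability of $x$. Hence only the minimal admissible $x_{i_s} \geq 1$ can occur.

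For (b), I would invoke Corollary~\ref{c:sig3}, which gives total-size bound $\|x\|_1 + \sigma(x) \leq m$ whenever $m \geq 7$ and $\sigma(x) \geq 3$, hence $\|x\|_1 \leq m - s$. Setting $y_\nu := x_{i_\nu} - 1 \geq 0$ for $\nu = 1, \ldots, s-1$ (valid since $\supp(x) = S$) and using $x_{i_s} \geq 1$ we obtain
\[
   \sum_{\nu=1}^{s-1} y_\nu \;=\; \|x\|_1 - x_{i_s} - (s-1) \;\leq\; m - 2s.
\]
A standard stars-and-bars count then produces $\binom{(m-2s)+(s-1)}{s-1} = \binom{m-s-1}{s-1}$ admissible prefixes; combined with (a) this is the desired bound.

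The main obstacle I foresee is the boundary case $m = 6$, $s = 3$, where Corollary~\ref{c:sig3} does not apply and Theorem~\ref{th:supp}(ii) only yields the weaker length bound $\|x\|_1 \leq m - s + 1 = 4$, losing a factor in the stars-and-bars count. This case has to be dispatched by hand, using the explicit description of width-$3$ extremals of ${\bf (C}_6{\bf )}$ recorded just after Theorem~\ref{th:extr}: a short inspection of the ten $3$-element subsets of $\{1,\ldots,5\}$ confirms that each supports at most one indecomposable solution, matching $\binom{2}{2} = 1$. Apart from this edge case, the proof is routine bookkeeping once (a) is in hand.
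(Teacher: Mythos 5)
Your argument is correct and is essentially the paper's own proof (the counting paragraph immediately preceding the lemma) made precise: the last coordinate of a solution with given support is forced by indecomposability, and the remaining $s-1$ coordinates are counted by stars and bars using the length bound $\|x\|_1 \leq m-s$ coming from Corollary~\ref{c:sig3}. Your separate treatment of the boundary case $m=6$, $s=3$ is in fact more careful than the paper, which asserts $\|x\|_1 \leq m-s$ for every $s \geq 3$ even though this fails for the two exceptional width-$3$ solutions $2\,e_2+e_3+e_5$ and $e_1+e_3+2\,e_4$ of ${\bf (C}_6{\bf )}$; your finite check that each $3$-element subset of $\{1,\ldots,5\}$ carries at most one indecomposable solution closes that case and matches the bound ${2 \choose 2}=1$.
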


For $m = 6$ and $s = 3$ we have the two exceptional solutions \mbox{$x = (0,2,1,0,1)$}
and $(1,0,1,2,0)$ with supports $S = \{2,3,5\}$ and $\{1,3,4\}$. These two sets
don't support any other indecomposable solutions. Since ${m-s-1 \choose s-1} = {2 \choose 2} = 1$,
Lemma~\ref{l:nsupp} is true also for $m = 6$.

Now we are done with Theorem~\ref{th:ell2}: For $s \geq 3$ the form of the summand
follows from Lemma~\ref{l:nsupp}.

The upper bound $3^{m-1}$ is a standard result on trinomial coefficients
(and trivial for $m = 2, 3$).

\begin{table}[htbp]
\caption{Comparing $\ell(m)$ with bounds and possible bounds}\label{t:mpm}
\begin{center}
\begin{tabular}{|c|rrrrrrr|} \hline
    $m$         &  4  &   5 &   6  &   7 &   8 &  9  &   10  \\ \hline
   $\ell(m)$    &  6  &  14 &  19  &  47 &  64 & 118 &  165  \\
  $m\cdot P(m)$ & 20  &  35 &  66  & 105 & 176 & 270 &  420  \\
    $q(m)$      &  6  &  16 &  45  & 126 & 357 &1016 & 2781  \\ \hline
  \hline
    $m$         &  11 &  12 & 13  &  14  &  15 &  16 &  17 \\ \hline
   $\ell(m)$    & 347 & 366 & 826 & 973  &1493 &2134 &3912 \\
  $m\cdot P(m)$ & 616 & 924 &1313 &1890  &2640 &3696 &5049 \\
    $q(m)$      &8350 &23606&64032&163891&393498&1517895&[\ldots] \\ \hline
  \hline
    $m$         &  18  &  19 &  20 &   21&  22 &  23  &  24 \\ \hline
   $\ell(m)$    &4037  &7935 &8246 &12966&17475&29161 &28064 \\
  $m\cdot P(m)$ &6930  &9310 &12540&16632&22044&28865 &37800 \\ \hline
\end{tabular}
\end{center}
\end{table}

Table~\ref{t:mpm} shows some explicit values where $P$ is the partition function, and
$q(m)$ is the bound from Theorem~\ref{th:ell2}, using the known values of ${\text{\sf SD}(m)}$.
The explicit values of $\ell(m)$ are taken from the On-line Encyclopedia of Integer
Sequences \cite{OEIS}. Figure~\ref{fig:logell} provides an illustration of these values
(extended to $m = 39$). The yellow line represents the lower bound from \cite{DEN} where
the unspecified proportionality factor is set to $1$.

\begin{figure}[htbp]
\begin{center}
\includegraphics[scale=0.65]{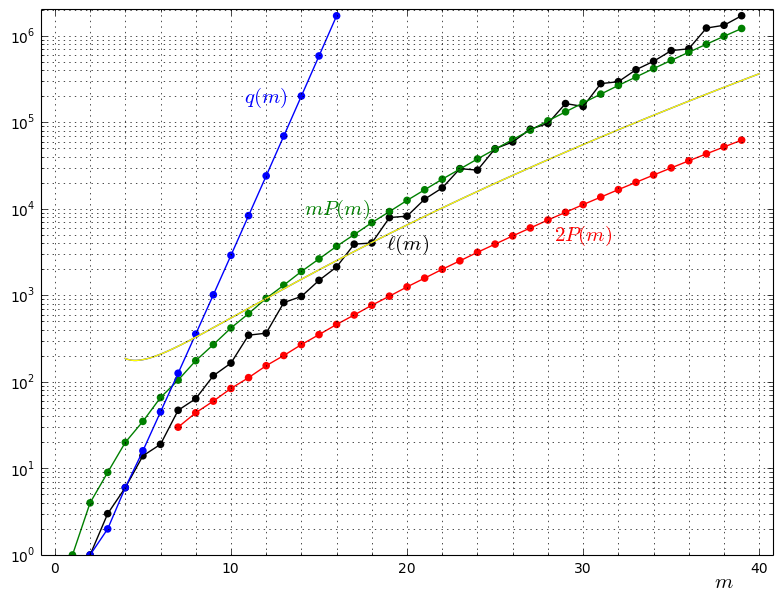}
\end{center}
\caption{The number of indecomposable solutions (semi-logarithmic scale)}\label{fig:logell}
\end{figure}

\begin{description}
   \item[Discussion] The bound $q(m)$ grows much too fast. Although significantly
      smaller than $3^{m-1}$ it seems to grow strictly exponentially. This phenomen
      has a simple heuristic explanation: In the proof of Theorem~\ref{th:ell2} we
      essentially counted all solutions in the respective simplices, not only the
      indecomposable ones. Since the solutions form the kernel of a homomorphism
      onto $\Z/m\Z$ we expect a fraction of $1/m$ of all vectors in this domain to
      yield solutions. Hence the upper bound of (volume of simplex) $\times 1/m$
      which is exponential.

      Thus for improvements we should not bother with the sum in Theorem~\ref{th:ell2} but
      rather analyze the number ${m-s-1 \choose s-1}$ in Lemma~\ref{l:nsupp} that
      overestimates the number of {\em indecomposable} solutions.

      On the other hand the value $m\,P(m)$ seems to provide a rather narrow lower
      bound for $m > 30$. This phenomen also has a heuristic explanation: The
      partitions of $m$ yield (roughly) $P(m)$ indecomposable solutions. The
      multiplicative group of order $\varphi(m)$ acts on the set of indecomposable
      solutions, and most of its orbits have size $\varphi(m)$. This
      consideration (if properly fleshed out) yields roughly $m\,P(m)$ different
      indecomposable solutions.

   \item[Some questions:]
      \begin{enumerate}
         \item Is $\ell(m) \geq m\,P(m)$ for $m > 30$\,?
         \item Is $\ell(m) \leq a\cdot e^{b\cdot \sqrt{m}}$ for certain constants
            $a$ und $b$\,?
         \item Is $\ell(m) \leq cm\cdot P(m)$ for $m \geq 2$ for some constant $c$\,?
            Note that this would imply a positive answer to question 1. Necessarily
            $c > 1$ if it exists at all since \mbox{$\ell(23) > 23\cdot P(23)$}.
         \item Or is at least $\ell(m) \leq f(m) \cdot P(m)$ for some polynomial $f$?
      \end{enumerate}
\end{description}

\end{document}